\newtheorem{theorem}{Theorem}[section]
\newtheorem{corollary}[theorem]{Corollary}
\theoremstyle{definition}
\newtheorem{definition}[theorem]{Definition}
\newtheorem{Open Prob}[theorem]{Open Problem}
\theoremstyle{remark}
\newtheorem{remark}[theorem]{Remark}
\numberwithin{equation}{section}
\def\DJ{\leavevmode\setbox0=\hbox{D}\kern0pt\rlap{\kern.04em\raise.188\ht0\hbox{-}}D}
\begin{document}

\title[Metrizability of $b$-metric space and $\theta$-metric space]{Metrizability of $b$-metric space and $\theta$-metric space via Chittenden's metrization theorem}

\author[S.\ Som]
{Sumit Som}

\address{ Sumit Som,
                    Department of Mathematics,
                    National Institute of Technology
                    Durgapur, India.}
                    \email{somkakdwip@gmail.com}

\keywords{ $b$-metric space, $\theta$-metric space, metrizability.\\
\indent 2010 {\it Mathematics Subject Classification}. $54$E$35$, $54$H$99$.}

\begin{abstract}
In \cite[\, An, V.T., Tuyen, Q.L., Dung, V.N., Stone-type theorem on $b$-metric spaces and applications, Topology Appl. 185-186 (2015) 50-64]{an}, Tran Van An et al. provide a sufficient condition for $b$-metric space to be metrizable. They proved the metrizability by assuming that the distance function is continuous in one variable. The main purpose of this manuscript is to provide a direct short proof of the metrizability of $b$-metric space introduced by Khamsi and Hussain in \cite[\, Khamsi, M.A and Hussain, N., KKM mappings in metric type spaces, Nonlinear Anal. 73 (9) (2010) 3123-3129]{kh} via Chittenden's metrization theorem without any assumption on the distance function. Further in this short note, we prove the metrizability of $\theta$-metric space introduced by Khojasteh et al. in \cite[\, Khojasteh, F., Karapinar, E., Radenovic, S., $\theta$-metric space: A Generalization, Mathematical problems in Engineering, Volume 2013, Article 504609, 7 pages]{ks}.

\end{abstract}

\maketitle

\setcounter{page}{1}

\section{\bf Introduction}
\baselineskip .55 cm
Many mathematicians are attracted to work on a topic which is more fundamental and has lot of applications in many diversified fields. One of the major motivations is to generalize or weaken a certain structure and develop new results compatible to the weaker one. Indeed, there are generalizations which  genuinely develop the subject as a whole and also, there are some, which contribute nothing new to the literature.
Likewise, to generalize the notion of distance functions, in 1906, Fr$\acute{e}$chet first introduced the concept of metric spaces and a century later, we have numerous generalizations of the metric structure.  Unfortunately, some of the generalizations become redundant and turn into metrizable merely adding premise to the subject. As in 2007, Huang and Zhang \cite{HZ} introduced the notion of a cone metric space on a positive cone in a Banach space. Following that, a lot of research articles dealt with the setting and evolved the structure with a number of results. Although in 2011, Khani and Pourmahdian \cite{KP} explicitly constructed a metric on a specified cone metric space and proved that cone metric spaces are metrizable. 

In the year 1993, Czerwik \cite{cz1} introduced the notion of $b$-metric as a generalization of a metric and also in the year 1998, Czerwik \cite{cz2} modified this notion where the coefficient $2$ was replaced by coefficient $s\geq 1.$ In the year 2010, Khamsi and Hussain \cite{kh} defined the concept of a $b$-metric under the name metric-type spaces. To avoid confusion, the metric type in the sense of Khamsi and Hussain \cite{kh} will be called $b$-metric in this paper. Now we like to recall the definition of a $b$-metric space from \cite{kh} as follows:

\begin{definition} [\cite{kh}, Definition 6]
Let $X$ be a non-empty set and $s>0.$ A distance function $\rho: X \times X\rightarrow [0,\infty)$ is said to be a $b$-metric on $X$ if it satisfies the following conditions :
\begin{enumerate}
\item[(i)] $\rho(x,y)=0\Longleftrightarrow x=y~\forall~(x,y)\in X \times X$.

\item[(ii)] $\rho(x,y)=\rho(y,x)~$ $\forall~ (x,y)\in X \times X$.

\item[(iii)]$\rho(x,z)\leq s[\rho(x,y)+\rho(y,z)]~$ $\forall~ x,y,z\in X$.
\end{enumerate}
Then the triple $(X,\rho,s)$ is called a $b$-metric space.
\end{definition}
If we take $s=1$ then $X$ will be a metric space. So $b$-metric space is weaker than metric space. In the year 2013, Khojasteh et al. \cite{ks} introduced the notion of a $\theta$-metric space by introducing the concept of an $B$-action on the set $[0,\infty)\times [0,\infty).$ Before proceeding to the definition of $\theta$-metric space, we first recall the definition of an $B$-action from \cite{ks} as follows.

\begin{definition}[\cite{ks}, Definition 4]
Let $\theta:[0,\infty)\times [0,\infty)\rightarrow [0,\infty)$ be a continuous mapping with respect to each variable. Let $Im(\theta)=\{\theta(s,t): s,t\geq 0\}.$ Then $\theta$ is called an $B$-action iff the following conditions are satisfied :
\begin{enumerate}
\item[(i)] $\theta(0,0)=0$ and $\theta(s,t)=\theta(t,s) ~\forall~ s,t \geq 0$.

\item[(ii)] $\theta(x,y)<\theta(s,t)$ if either $x\leq s,~ y<t$ or $x< s,~ y\leq t$.

\item[(iii)] For each $m\in Im(\theta)$ and for each $t\in [0,m],$ there exists $s\in [0,m]$ such that $\theta(s,t)=m.$

\item[(iv)] $\theta(s,0)\leq s ~\forall~ s>0$.

\end{enumerate}

\end{definition}
They denoted the collection of all $B$-actions by $Y$. Now we like to recall the definition of a $\theta$-metric space from \cite{ks} as follows :

\begin{definition}[\cite{ks}, Definition 11]
Let $X$ be a non-empty set. A distance function $\rho: X \times X\rightarrow [0,\infty)$ is said to be a $\theta$-metric on $X$ with respect to an $B$-action $\theta\in Y$  if the following conditions are satisfied :
\begin{enumerate}
\item[(i)] $\rho(x,y)=0\Longleftrightarrow x=y~\forall~(x,y)\in X \times X$.

\item[(ii)] $\rho(x,y)=\rho(y,x)~$ $\forall~ (x,y)\in X \times X$.

\item[(iii)]$\rho(x,z)\leq \theta(\rho(x,y),\rho(y,z))~$ $\forall~ x,y,z\in X$.

\end{enumerate}
Then the triple $(X,\rho, \theta)$ is called a $\theta$-metric space. If we take $\theta(s,t)=s+t, ~s,t\geq 0$ then $\theta$-metric space reduce to metric space. But one thing to notice is that, in this case, the $B$-action $\theta$ is continuous at the point $(0,0).$

\end{definition}
In \cite{kh}, Khamsi and Hussain defined a natural topology on $b$-metric spaces, similarly to that of metric spaces and study the topological properties and KKM mappings on such kind of spaces. In \cite{ks}, Khojasteh et al. defined a natural topology on $\theta$-metric space and study the topological properties.

In this manuscript, we use Chittenden's metrization theorem \cite{CD} to prove the metrizability of $b$-metric spaces and $\theta$-metric spaces. Further, for more details one can see \cite{FR}. Chittenden's metrization theorem came up in the year 1917, which give sufficient conditions to show that a topological space $X$ is metrizable.

Before proceeding to our main result, we first recall the metrization result due to Chittenden \cite{CD}. Let $X$ be a topological space and $F:X \times X\rightarrow [0,\infty)$ be a distance function on $X$. If the distance function $F$ satisfies the following conditions:
\begin{enumerate}
\item[(i)] $F(x,y)=0\Longleftrightarrow x=y~\forall~(x,y)\in X \times X$.

\item[(ii)] $F(x,y)=F(y,x),~$ $\forall~ (x,y)\in X \times X$.

\item[(iii)](Uniformly regular) For every $\varepsilon>0$ and $x,y,z \in X$ there exists $\phi(\varepsilon)>0$ such that if $F(x,y)<\phi(\varepsilon)$ and $F(y,z)<\phi(\varepsilon)$ then $F(x,z)<\varepsilon$
\end{enumerate}
then the topological space $X$ is metrizable. 

\section{\bf Main Results}
In \cite{an}, Tran Van An et al. provide a sufficient condition for $b$-metric space to be metrizable. First of all we like to state the theorem due to Tran Van An et al. 

\begin{theorem} [\cite{an}, Theorem 3.15.]
Let $(X,D,K)$ be a $b$-metric space. If $D$ is continuous in one variable then every open cover of $X$ has an open refinement which is both locally finite and $\sigma$-discrete.
\end{theorem}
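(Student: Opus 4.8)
\emph{Sketch of a proof.} I would adapt M.\,E.\ Rudin's short proof of Stone's theorem for metric spaces to the $b$-metric setting, the only genuinely new ingredients being an observation explaining why the continuity hypothesis is indispensable, together with a choice of radii that absorbs the weak-triangle constant $K$. For the first point: since $D$ is continuous in one variable and symmetric, the map $x\mapsto D(a,x)$ is continuous for every $a\in X$, so each ball $B(a,r)=\{x\in X:D(a,x)<r\}$ is an \emph{open} subset of $X$, and these balls form a base for the $b$-metric topology. (In an arbitrary $b$-metric space balls need not be open, and then no such refinement theorem can hold, so this is exactly the place where the hypothesis enters.) From here on balls may be used freely as open neighbourhoods.

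Now well-order the given open cover as $\mathcal{U}=\{U_\xi:\xi<\lambda\}$, fix a contraction ratio $\delta\in(0,1)$ that is small relative to $K$ and a constant $C>K+K^{2}$, and put $r_n=\delta^{\,n}$ and $R_n=Cr_n$ for $n\ge1$. By recursion on $n$, and for each $n$ by transfinite recursion on $\xi$, define
\[
V_{n,\xi}=\bigcup\{\,B(x,r_n):x\in A_{n,\xi}\,\},
\]
where $A_{n,\xi}$ consists of all $x\in X$ such that (i) $\xi$ is least with $x\in U_\xi$; (ii) $B(x,R_n)\subseteq U_\xi$; (iii) $x\notin V_{m,\eta}$ for all $m<n$ and all $\eta$; (iv) $x\notin V_{n,\eta}$ for all $\eta<\xi$. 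Each $V_{n,\xi}$ is open by the previous paragraph, and $V_{n,\xi}\subseteq B(x,R_n)\subseteq U_\xi$ for the relevant centres, so $\mathcal{V}=\{V_{n,\xi}:n\ge1,\ \xi<\lambda\}$ is an open refinement of $\mathcal{U}$.

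It then remains to verify three claims. \emph{$\mathcal{V}$ covers $X$}: given $x$, take $\xi$ least with $x\in U_\xi$ and, since $R_n\downarrow0$, an $n$ with $B(x,R_n)\subseteq U_\xi$; then either $x$ already lies in some $V_{m,\eta}$ with $m<n$, or (i)--(iv) hold and $x\in V_{n,\xi}$. \emph{For fixed $n$ the family $\{V_{n,\xi}:\xi<\lambda\}$ is discrete}: if $\alpha<\beta$, $p\in V_{n,\alpha}$ comes from a centre $x\in A_{n,\alpha}$ and $q\in V_{n,\beta}$ from $y\in A_{n,\beta}$; minimality of $\beta$ gives $y\notin U_\alpha$ while $B(x,R_n)\subseteq U_\alpha$, so $D(x,y)\ge R_n$, and expanding $D(x,y)\le KD(x,p)+K^{2}D(p,q)+K^{2}D(q,y)$ with $D(x,p),D(q,y)<r_n$ yields $D(p,q)\ge(R_n-(K+K^{2})r_n)/K^{2}>0$; hence a sufficiently small ball about any point of $X$ meets at most one $V_{n,\xi}$. \emph{$\mathcal{V}$ is locally finite}: given $x$, choose $(n,\xi)$ with $x\in V_{n,\xi}$ and $j\ge n$ with $B(x,r_j)\subseteq V_{n,\xi}$; for $m$ large the triangle estimate forces any point of $B(x,r_j)$ that could lie in some $V_{m,\eta}$ to have its centre inside $V_{n,\xi}$, contradicting (iii), so a suitable small ball about $x$ meets $V_{m,\eta}$ only for finitely many levels $m$, and for each such level at most one $\eta$ by the discreteness just proved.

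The part I expect to be the real work is the constant-chasing in the last two claims: one application of $D(x,z)\le K[D(x,y)+D(y,z)]$ introduces factors $K$ and $K^{2}$, so the safety radius must be forced above $(K+K^{2})r_n$ and $\delta$ must be small enough in terms of $K$ that the ``$m$ large'' case of local finiteness still closes; once those inequalities are written out explicitly the bookkeeping is routine and the scheme runs exactly as in the metric case. (Alternatively one could first deduce from the continuity hypothesis that the topology is metrizable and then quote the classical Stone theorem; but if the statement is wanted as a step toward metrizability, the self-contained construction above is the appropriate one.)
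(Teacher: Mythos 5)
This statement is quoted in the paper as background (it is Theorem~3.15 of An--Tuyen--Dung), and the paper gives no proof of it; so there is no in-paper argument to compare against, and your proposal has to stand on its own. On its own it is sound: adapting M.~E.~Rudin's proof of Stone's theorem is exactly the right route, and you correctly isolate the one place where the hypothesis is genuinely needed, namely that continuity of $D(a,\cdot)$ makes every ball $B(a,r)$ open (in a general $b$-metric space balls need not be open, and the whole ball-union construction would then fail to produce open sets). Your constant-chasing for level-wise discreteness is correct: two applications of the relaxed triangle inequality give $D(x,y)\le K D(x,p)+K^{2}D(p,q)+K^{2}D(q,y)$, so $C>K+K^{2}$ yields a uniform separation $c_n=(R_n-(K+K^{2})r_n)/K^{2}>0$ between distinct $V_{n,\alpha}$, $V_{n,\beta}$, and a ball of radius below $c_n/(2K)$ about any point (open, again by the continuity hypothesis) meets at most one of them --- note that you do need this uniform lower bound, not merely $D(p,q)>0$, and your estimate supplies it. The covering argument and the local-finiteness argument (centres of level $m>n$ are excluded from $V_{n,\xi}$ by clause (iii), hence lie at distance at least $r_j$ from $x$, which kills all sufficiently large $m$ at once and leaves finitely many levels to be handled by discreteness) both close once the inequalities are written out; what you have deferred really is routine bookkeeping. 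Two small remarks: clause (iv) is harmless but not needed, since minimality of $\xi$ already separates the level-$n$ pieces; and your parenthetical alternative (prove metrizability first, then quote Stone) would be circular in the source paper, where this theorem is the step toward metrizability --- though in the present paper, which proves metrizability independently via Chittenden's theorem, that shortcut would actually be legitimate.
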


\begin{corollary} [\cite{an}, Corollary 3.17.]
Let $(X,D,K)$ be a $b$-metric space. If $D$ is continuous in one variable then $X$ is metrizable.
\end{corollary}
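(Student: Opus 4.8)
The plan is to deduce this corollary from the Stone--type theorem of An et al.\ stated above (Theorem~2.1) together with a classical metrization criterion. Since that theorem guarantees that every open cover of $X$ has an open refinement which is both locally finite and $\sigma$-discrete, the natural companion is Bing's metrization theorem: a regular $T_1$ space is metrizable if and only if it possesses a $\sigma$-discrete base. Thus the work splits into two parts --- extract a $\sigma$-discrete base from Theorem~2.1, and check that a $b$-metric space whose distance is continuous in one variable is $T_1$ and regular.

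First I would record what the continuity hypothesis buys. If $D$ is continuous in, say, its second variable, then for each fixed $x$ the map $y\mapsto D(x,y)$ is continuous, so every ball $B(x,\varepsilon)=\{y\in X: D(x,y)<\varepsilon\}$ is open; in particular the balls form a base for the topology of $X$, and $\overline{B(x,\delta)}\subseteq\{y\in X: D(x,y)\le\delta\}$ for every $\delta>0$. Hausdorffness is then routine: distinct $x,y$ with $D(x,y)=r>0$ are separated by $B(x,r/(2K))$ and $B(y,r/(2K))$, which are disjoint by the $b$-triangle inequality $D(x,z)\le K[D(x,y)+D(y,z)]$. For regularity, if $F$ is closed and $x\notin F$, choose $\varepsilon>0$ with $B(x,\varepsilon)\cap F=\emptyset$ and any $\delta$ with $0<\delta<\varepsilon$; then $B(x,\delta)$ and $X\setminus\overline{B(x,\delta)}$ are disjoint open sets separating $x$ and $F$, since $\overline{B(x,\delta)}\subseteq\{y\in X: D(x,y)\le\delta\}\subseteq B(x,\varepsilon)$.

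Next I would manufacture the base. For each $n\in\mathbb{N}$ apply Theorem~2.1 to the open cover $\{B(x,1/n): x\in X\}$, obtaining a locally finite, $\sigma$-discrete open refinement $\mathcal{V}_n$; put $\mathcal{V}=\bigcup_{n}\mathcal{V}_n$, which is again $\sigma$-discrete, a countable union of $\sigma$-discrete families being $\sigma$-discrete. To verify that $\mathcal{V}$ is a base, take an open set $U$ and a point $x\in U$, pick $\varepsilon>0$ with $B(x,\varepsilon)\subseteq U$, and fix $n$ with $n>2K/\varepsilon$. Since $\mathcal{V}_n$ covers $X$, some $V\in\mathcal{V}_n$ contains $x$, and $V\subseteq B(z,1/n)$ for some $z\in X$; then for every $w\in V$ the $b$-triangle inequality gives $D(x,w)\le K\big(D(x,z)+D(z,w)\big)<2K/n<\varepsilon$, so $x\in V\subseteq B(x,\varepsilon)\subseteq U$. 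Hence $\mathcal{V}$ is a $\sigma$-discrete base, and Bing's metrization theorem yields that $X$ is metrizable.

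The only point in this argument that goes genuinely beyond the classical metric case --- and the one I expect to require the most care --- is the passage between balls of two different radii, where the multiplicative constant $K$ of the $b$-triangle inequality intervenes; one absorbs it simply by taking $n$ large compared to $K$. Finally, I would remark that the corollary can in fact be obtained far more cheaply: the $b$-triangle inequality alone already shows that $D$ is uniformly regular in Chittenden's sense, with the choice $\phi(\varepsilon)=\varepsilon/(2K)$, so the metrization theorem of Chittenden recalled at the end of Section~1 applies directly --- with no continuity hypothesis on $D$ and without invoking the Stone--type theorem at all. Turning this observation into a theorem, and running the analogous argument for $\theta$-metric spaces, is precisely what the remainder of the paper does.
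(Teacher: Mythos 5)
Your argument is correct, but note that the paper itself offers no proof of this corollary: it is quoted verbatim from An et al.\ and serves only as a foil for Theorem~\ref{FMS}. Your main argument --- feeding the covers $\{B(x,1/n):x\in X\}$ into the Stone-type theorem, taking the union of the resulting $\sigma$-discrete refinements, and invoking Bing's metrization theorem --- is essentially the route of the original source, and you handle the two points where the hypothesis and the constant $K$ genuinely matter: continuity of $D$ in one variable is what makes the balls open (hence a base) and makes $\{y:D(x,y)\le\delta\}$ closed (hence regularity), and the inflation by $K$ in passing from $V\subseteq B(z,1/n)$ to $V\subseteq B(x,\varepsilon)$ is absorbed by choosing $n>2K/\varepsilon$. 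Your closing observation is exactly the point of the paper: the Chittenden criterion with $\phi(\varepsilon)=\varepsilon/(2K)$ applies with no continuity assumption whatsoever, so the corollary (and more) follows in three lines; that is precisely Theorem~\ref{FMS}. The comparison is instructive: the Bing route buys the extra structural information of Theorem~2.1 (a locally finite, $\sigma$-discrete refinement of an arbitrary cover) but pays for it with the continuity hypothesis, whereas the Chittenden route gives metrizability outright because the $b$-triangle inequality is already a uniform regularity condition.
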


They proved the metrizability by assuming that the distance function on the space is continuous in one variable but in this section, we will provide a short proof of the metrizability of $b$-metric space without any assumption on the distance function.

\begin{theorem} \label{FMS}
Let $(X,d,S), S>0$ be a $b$-metric space. Then $X$ is metrizable.
\end{theorem}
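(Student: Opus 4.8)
The plan is to verify that the $b$-metric $d$ itself already fulfils the three hypotheses of Chittenden's metrization theorem, so that no auxiliary distance function needs to be built. Concretely, I would take $F=d$ and regard $X$ as the topological space $(X,\tau_d)$, where $\tau_d$ is the topology introduced by Khamsi and Hussain, i.e.\ the one in which a set $U$ is open exactly when for every $x\in U$ there is $r>0$ with $B_d(x,r)=\{y\in X:d(x,y)<r\}\subseteq U$. Then it suffices to check conditions (i)--(iii) of Chittenden's theorem for $F=d$ on this space.

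Conditions (i) and (ii) are literally axioms (i) and (ii) in the definition of a $b$-metric, so there is nothing to prove there. The only substantive point is the uniform regularity condition (iii), and this follows immediately from the relaxed triangle inequality. Given $\varepsilon>0$, I would simply set $\phi(\varepsilon)=\dfrac{\varepsilon}{2S}>0$. If $x,y,z\in X$ satisfy $d(x,y)<\phi(\varepsilon)$ and $d(y,z)<\phi(\varepsilon)$, then by axiom (iii) of the $b$-metric,
\[
d(x,z)\ \le\ S\bigl[d(x,y)+d(y,z)\bigr]\ <\ S\cdot 2\phi(\varepsilon)\ =\ \varepsilon ,
\]
which is exactly the assertion of uniform regularity. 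Hence $d$ is a symmetric distance function with $d(x,y)=0\iff x=y$ that is uniformly regular, so Chittenden's theorem applies to $(X,\tau_d)$ and yields a metric on $X$ inducing the topology $\tau_d$. Since $\tau_d$ is by definition the $b$-metric topology of $(X,d,S)$, this proves that $(X,d,S)$ is metrizable.

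There is essentially no serious obstacle here: the whole content of the argument is the one-line estimate above, and the improvement over Tran Van An et al.\ is precisely the observation that the relaxed triangle inequality alone forces uniform regularity, so no continuity hypothesis on $d$ is needed. The only point that I would state explicitly for completeness is the bookkeeping that the topology handed to Chittenden's theorem is exactly $\tau_d$ and that ``metrizable'' is understood in the strong sense that this same topology is induced by a metric; both are immediate from the definitions but deserve a sentence so that the conclusion is not read as weaker than intended. The same template will then be reused for $\theta$-metric spaces, with a bound of the form $\theta\bigl(\phi(\varepsilon),\phi(\varepsilon)\bigr)<\varepsilon$ (available since $\theta(\phi,\phi)\downarrow 0$ as $\phi\downarrow 0$) playing the role of $2S\phi(\varepsilon)=\varepsilon$.
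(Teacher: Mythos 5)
Your proposal is correct and follows essentially the same route as the paper: both take $F=d$, note that conditions (i) and (ii) of Chittenden's theorem are the first two $b$-metric axioms, and verify uniform regularity with the choice $\phi(\varepsilon)=\varepsilon/(2S)$ via the relaxed triangle inequality. Your added remark about the topology $\tau_d$ being the one handed to Chittenden's theorem is a sensible clarification but does not change the argument.
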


\begin{proof}
 Let $(X,d,S)$ be a $b$-metric space. By the definition of a $b$-metric space, the distance function $d:X \times X\rightarrow [0,\infty)$ on $X$ satisfies the first two conditions of Chittenden's metrization result i.e,
\begin{enumerate}
\item[(i)] $d(x,y)=0\Longleftrightarrow x=y~\forall~(x,y)\in X \times X$;

\item[(ii)] $d(x,y)=d(y,x)~$ $\forall~ (x,y)\in X \times X$.
\end{enumerate}
Now we will prove the third condition i.e., the `uniformly regular' condition. Let $\varepsilon>0$ and $x,y,z \in X.$ If $x=z$, then $d(x,z)=0$. So in this case $\phi(\varepsilon)=r$ where $r$ is any positive real number will serve the purpose. Now let $x\neq z.$ Then $d(x,z)>0.$ So by the definition of a $b$-metric space we have,
\begin{equation}
d(x,z)\leq S[d(x,y)+ d(y,z)]
\label{aa}
\end{equation}
Now let us choose $\phi(\varepsilon)=\frac{\varepsilon}{2S}.$ If $d(x,y)< \frac{\varepsilon}{2S}$ and $d(y,z)<\frac{\varepsilon}{2S}$ then $d(x,y)+d(y,z)<\frac{\varepsilon}{S}.$ So by the equation \ref{aa}, we have 
$$ d(x,z)<S. \frac{\varepsilon}{S}$$
$$\Rightarrow d(x,z)<\varepsilon.$$
This shows that the distance function $d$ of a $b$-metric space satisfies the uniformly regular condition. Consequently, by Chittenden's metrization result we can conclude that the $b$-metric space $X$ is metrizable.
\end{proof}

\begin{remark}
So from Theorem \ref{FMS} we can conclude that if $(X,D,S), S>0$ be a $b$-metric space then there exists a metric $d:X \times X\rightarrow [0,\infty)$ on $X$ such that $X$ is metrizable with respect to $d.$ So we can conclude that the topological properties of $b$-metric spaces discussed in \cite[\, Proposition 2, Proposition 3]{kh} are equivalent to those of the standard metric spaces.
\end{remark}

Now in our next theorem we prove the metrizability of $\theta$-metric space through Chittenden's metrization theorem.

\begin{theorem}\label{met}
Let $(X,d,\theta)$ be a $\theta$-metric space where $\theta$ is an $B$-action on $[0,\infty)\times [0,\infty).$ Then $X$ is metrizable.
\end{theorem}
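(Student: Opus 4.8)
The plan is to mirror the proof of Theorem~\ref{FMS} and verify the three hypotheses of Chittenden's metrization theorem for the distance function $d$ of the $\theta$-metric space $(X,d,\theta)$. Conditions (i) and (ii) are immediate from the definition of a $\theta$-metric, so the entire content of the proof is establishing the uniform regularity of $d$. Given $\varepsilon>0$, I must produce $\phi(\varepsilon)>0$ such that $d(x,y)<\phi(\varepsilon)$ and $d(y,z)<\phi(\varepsilon)$ force $d(x,z)<\varepsilon$, using only $d(x,z)\le\theta(d(x,y),d(y,z))$ and monotonicity of $\theta$.

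First I would reduce to the case $x\neq z$ exactly as before (if $x=z$ then $d(x,z)=0<\varepsilon$ and any positive $\phi(\varepsilon)$ works). The key analytic input is the behaviour of $\theta$ near the diagonal: because $\theta$ is continuous in each variable with $\theta(0,0)=0$, I want to say $\theta(\delta,\delta)\to 0$ as $\delta\to 0^{+}$ and hence pick $\delta=\phi(\varepsilon)$ with $\theta(\phi(\varepsilon),\phi(\varepsilon))<\varepsilon$; then by the strict monotonicity axiom (ii) for a $B$-action, $d(x,y)<\phi(\varepsilon)$ and $d(y,z)<\phi(\varepsilon)$ give
$$d(x,z)\le\theta(d(x,y),d(y,z))\le\theta(\phi(\varepsilon),\phi(\varepsilon))<\varepsilon,$$
where the middle inequality uses monotonicity of $\theta$ in each variable (which follows from axiom (ii): $\theta(a,b)\le\theta(a',b')$ whenever $a\le a'$, $b\le b'$, with strict inequality when one of the coordinate inequalities is strict). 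Once uniform regularity is in hand, Chittenden's theorem yields metrizability of $X$ and the proof is complete.

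The main obstacle is the subtle point flagged in the remark following Definition~7 of the excerpt: a $B$-action $\theta$ need not be continuous at $(0,0)$ — it is only assumed continuous in each variable separately. So I cannot simply invoke joint continuity to conclude $\theta(\delta,\delta)\to 0$. The honest route is: fix $\varepsilon>0$; by continuity of $t\mapsto\theta(0,t)$ at $t=0$ together with $\theta(0,0)=0$, choose $\eta>0$ with $\theta(0,\eta)<\varepsilon$; then by continuity of $s\mapsto\theta(s,\eta)$ at $s=0$ together with $\theta(0,\eta)<\varepsilon$, choose $\delta\in(0,\eta]$ with $\theta(\delta,\eta)<\varepsilon$; set $\phi(\varepsilon)=\delta$. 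Now if $d(x,y)<\delta\le\eta$ and $d(y,z)<\delta\le\eta$, monotonicity gives $d(x,z)\le\theta(d(x,y),d(y,z))\le\theta(\delta,\eta)<\varepsilon$. This iterated-continuity argument is precisely what replaces the clean estimate $\frac{\varepsilon}{2S}$ available in the $b$-metric case, and it is the only place where any real care is needed; everything else is bookkeeping against Chittenden's three conditions. One should also note that axiom (iv), $\theta(s,0)\le s$, and axiom (iii) are not needed for metrizability — only (i), (ii), and separate continuity are used.
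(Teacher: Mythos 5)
Your proposal is correct, and on the one nontrivial point it takes a genuinely different route from the paper. The paper first proves that $\theta$ is \emph{jointly} continuous at $(0,0)$ by a sequential argument: it asserts $\theta(s_n,t_n)-\theta(s_n,0)\to 0$ from continuity in the second variable and then invokes axiom (iv), $\theta(s_n,0)\le s_n$, to conclude $\theta(s_n,t_n)\to 0$; it then chooses $\delta$ from joint continuity and sets $\phi(\varepsilon)=\delta/\sqrt{2}$. You instead avoid the joint-continuity detour altogether: you use separate continuity twice ($\theta(0,\eta)<\varepsilon$ for small $\eta$, then $\theta(\delta,\eta)<\varepsilon$ for small $\delta\le\eta$) and the monotonicity axiom (ii) to get $\theta(s,t)<\theta(\delta,\eta)<\varepsilon$ on the square $[0,\delta)\times[0,\delta)$, which is exactly the $\phi(\varepsilon)$ Chittenden needs. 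Your version is arguably the more solid one: the paper's step ``$\theta(s_n,t_n)\to\theta(s_n,0)$ as $n\to\infty$'' treats the first coordinate as fixed while it is in fact varying with $n$, so separate continuity alone does not justify the uniform estimate $\lvert\theta(s_n,t_n)-\theta(s_n,0)\rvert<\varepsilon/2$ for all large $n$; your iterated-continuity-plus-monotonicity argument closes that gap and, as you note, dispenses with axioms (iii) and (iv) entirely. The two approaches then feed $\phi(\varepsilon)$ into Chittenden's theorem identically.
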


\begin{proof}
First of all, we show that the $B$-action $\theta$ is continuous at the point $(0,0).$ Suppose that $\{(s_n,t_n)\}_{n\in \mathbb{N}}$ be a sequence in $[0,\infty)\times [0,\infty)$ such that $(s_n,t_n)\rightarrow (0,0)$ as $n\rightarrow \infty.$ This implies $s_{n}\rightarrow 0$ and $t_{n}\rightarrow 0$ as $n\rightarrow \infty$ in the standard norm in $[0,\infty)\times [0,\infty).$ Now as the $B$-action $\theta$ is continuous in the second variable so keeping the first coordinate fixed and move the second coordinate as $n\rightarrow \infty$ we have, $\theta(s_n,t_n)\rightarrow \theta(s_n,0)$ as $n\rightarrow \infty.$ Let $\varepsilon>0.$ Now for $\frac{\varepsilon}{2}>0$ there exists $k_1\in \mathbb{N}$ such that $$\vert \theta(s_n,t_n)- \theta(s_n,0)\vert < \frac{\varepsilon}{2}~\forall~n\geq k_1.$$ Also since $s_n \rightarrow 0$ as $n\rightarrow \infty,$ so, for $\frac{\varepsilon}{2}>0$ there exists $k_2 \in \mathbb{N}$ such that $$s_n< \frac{\varepsilon}{2}~\forall~n\geq k_2.$$ Take $k=\mbox{max}\{k_1,k_2\}.$ Then for $n\geq k$ we have,
$$\vert \theta(s_n,t_n)\vert = \vert \theta(s_n,t_n)- \theta(s_n,0)+\theta(s_n,0)\vert\leq \vert \theta(s_n,t_n)- \theta(s_n,0)\vert + \theta(s_n,0)$$
$$\Longrightarrow \vert \theta(s_n,t_n)\vert \leq \vert \theta(s_n,t_n)- \theta(s_n,0)\vert + s_{n}$$
$$\Longrightarrow \theta(s_n,t_n)<\varepsilon.$$
This shows that the $B$-action $\theta$ is continuous at the point $(0,0).$ Now we will prove that $X$ is metrizable.

By the definition of a $\theta$-metric space, the distance function $d:X \times X\rightarrow [0,\infty)$ on $X$ satisfies the first two conditions of Chittenden's metrization result i.e,
\begin{enumerate}
\item[(i)] $d(x,y)=0\Longleftrightarrow x=y~\forall~(x,y)\in X \times X$;

\item[(ii)] $d(x,y)=d(y,x)~$ $\forall~ (x,y)\in X \times X$.
\end{enumerate}
Now we will prove the third condition i.e., the `uniformly regular' condition. Let $\varepsilon>0$ and $x,y,z \in X.$ If $x=z$, then $d(x,z)=0$. So in this case $\phi(\varepsilon)=r$ where $r$ is any positive real number will serve the purpose. Now let $x\neq z.$ Then $d(x,z)>0.$ So by the definition of a $\theta$-metric space we have,
\begin{equation}
d(x,z)\leq \theta(d(x,y), d(y,z)).
\label{ab}
\end{equation}
Since the $B$-action is continuous at $(0,0)$ so for $\varepsilon>0$ there exists $\delta>0$ such that $$|\theta(s,t)-\theta(0,0)|<\varepsilon~\mbox{whenever}~(s,t)\in B\Big((0,0),\delta \Big)\bigcap \Big([0,\infty)\times [0,\infty) \Big)$$
$$\Rightarrow \theta(s,t)< \varepsilon ~\mbox{whenever}~(s,t)\in B\Big((0,0),\delta\Big)\bigcap \Big([0,\infty)\times [0,\infty)\Big)$$
Here $B\Big((0,0),\delta\Big)$ denotes the open ball centered at $(0,0)$ and radius $\delta$ in the standard norm.
Let us choose $\phi(\varepsilon)=\frac{\delta}{\sqrt{2}}.$ Now 
$$d(x,y)<\frac{\delta}{\sqrt{2}}, d(y,z)<\frac{\delta}{\sqrt{2}}\Longrightarrow (d(x,y),d(y,z))\in B\Big((0,0),\delta\Big)\bigcap \Big([0,\infty)\times [0,\infty)\Big)$$
$$\Longrightarrow \theta(d(x,y),d(y,z))<\varepsilon.$$ So from the equation \ref{ab} we have, $$d(x,z)<\varepsilon.$$ This shows that the distance function $d$ of a $\theta$-metric space satisfies the uniformly regular condition. Consequently, by Chittenden's metrization result we can conclude that the $\theta$-metric space $X$ is metrizable.


\end{proof}



\bibliographystyle{plain}

\end{document}